\newcommand{\e}{\varepsilon}
\newcommand{\ds}{\, ds}
\newcommand{\R}{\mathbb{R}}
\newcommand{\grad}{\nabla}
\newcommand{\vyavg}{\vy_{\textup{avg}}}
\newcommand{\ones}{{\bf 1}}
\newcommand{\ddt}{\frac{d}{dt}}
\newcommand{\cl}{\textup{cl}}
\newcommand{\dtau}{\,d\tau}
 \def\vr{{\bf r}}  
\def\vu{{\bf u}}   \def\vx{{\bf x}}
\def\vy{{\bf y}} \def\vz{{\bf z}}
  \def\calC{\mathcal{C}}
 \def\calN{\mathcal{N}} 
\def\calS{\mathcal{S}} \def\calT{\mathcal{T}}
\newtheorem{theorem}{Theorem}
\newtheorem{assumption}{Assumption}
\newtheorem{lemma}[theorem]{Lemma}
\newtheorem{definition}[theorem]{Definition}
\title{Distributed Gradient Descent: Nonconvergence to Saddle Points and the Stable-Manifold Theorem}
\author{Brian Swenson$^\dagger$, Ryan Murray$^\star$, H. Vincent Poor$^\dagger$, and Soummya Kar$^\ddagger$
\thanks{\noindent This work was partially supported by the Air Force Office of Scientific Research under MURI Grant FA9550-18-1-0502. 
\newline
$^\dagger$Department of Electrical Engineering, Princeton University, Princeton, NJ 08540 (bswenson@princeton.edu and poor@princeton.edu),\newline
$^\star$Department of Mathematics, North Carolina State University, Raleigh, NC 27695 (rwmurray@ncsu.edu) \newline
$^\ddagger$Department of Electrical and Computer Engineering, Carnegie Mellon University, Pittsburgh, PA 15213 (soummyak@andrew.cmu.edu)}
}
\begin{document}

\maketitle

\begin{abstract}
The paper studies continuous-time distributed gradient descent (DGD) and considers the problem of showing that in nonconvex optimization problems, DGD typically converges to local minima rather than saddle points. In centralized settings, the problem of demonstrating nonconvergence to saddle points
is typically handled by way of the stable-manifold theorem from classical dynamical systems theory. However, the classical stable-manifold theorem is not applicable in the distributed setting. The paper develops an appropriate stable-manifold theorem for DGD. This shows that convergence to saddle points may only occur from a low-dimensional stable manifold.
Under appropriate assumptions (e.g., coercivity), the result implies that DGD almost always converges to local minima.
\end{abstract}

\begin{IEEEkeywords}
Distributed optimization, nonconvex optimization, gradient descent, multi-agent systems, saddle points, stable-manifold theorem
\end{IEEEkeywords}

\section{Introduction}
Suppose a group of $N$ agents may communicate over a network. Each agent possesses some local function $f_n:\R^d\to\R$ and it is desired to optimize the sum function $f:\R^d\to\R$ given by
\begin{equation} \label{eqn:f-def}
f(x):= \sum_{n=1}^N f_n(x).
\end{equation}
In applications, the function $f_n$ is typically generated from local information available only to agent $n$, and \eqref{eqn:f-def} represents some collective objective a system designer would like to optimize
\cite{rabbat2004distributed,chen2012diffusion,cao2012overview,boyd2011distributed,kar2019clustering}.
We are interested in the use of distributed gradient descent processes to compute local optima of \eqref{eqn:f-def} wherein agents may only exchange information with neighboring agents
e.g., \cite{nedic2009distributed}.

In this paper we focus on the case where the local $f_n$ functions may be \emph{nonconvex}.
This framework encompasses a wide range of applications including, for example, empirical risk minimization \cite{lee2018distributed}, target localization \cite{di2015distributed}, robust regression \cite{sun2016distributed}, distributed coverage control \cite{welikala2019distributed}, power allocation in wireless adhoc networks \cite{bianchi2012convergence}, and others \cite{di2016next}.

Assuming the objective is smooth, basic convergence results in nonconvex optimization typically ensure that algorithms converge to critical points. This set consists, of course, of local and global minima and saddle points.
Global minima can be difficult to compute and, for practical purposes, local minima are often sufficient in applications \cite{dauphin2014identifying}. Thus, global optima aside, the main difficulty in proving that an algorithm has desirable convergence properties typically lies in understanding the behavior near saddle points, and, in particular, showing nonconvergence to saddle points \cite{lee2016gradient,lee2017first,murray2019revisiting}.

For classical (centralized) gradient descent, the problem of showing non-convergence to saddle points is handled using the well-known ``stable-manifold theorem'' from dynamical systems theory \cite{shub2013global,coddington1955theory,lee2016gradient}.
In short, the stable-manifold theorem says that gradient descent (along with many other first-order algorithms \cite{lee2017first}) can only converge to a saddle point if initialized on some low-dimensional hypersurface (referred to as the stable manifold).\footnote{The stable-manifold theorem deals with unstable points of general dynamical systems, not just gradient-type systems. However, restricted to gradient-type systems, this is the main implication of the result.} Any process initialized on the stable manifold will remain on the stable manifold thereafter, eventually converging to the saddle point of interest. On the other hand, any process not initialized on the stable manifold will be repelled from the saddle point (eventually converging to some local minimum, assuming, for example, that $f$ is coercive). In this way, the problem of understanding (non)convergence to saddle points in classical settings is completely resolved by the stable-manifold theorem.

In the distributed setting, this is not the case. The classical stable-manifold theorem does not generally apply and specialized stable-manifold theorem results do not exist. Several recent works, including \cite{bianchi2012convergence,di2016next,sun2016distributed,swenson2019CDC,tatarenko2017non}, have considered gradient-descent type algorithms for distributed nonconvex optimization. These have shown convergence to critical points, but have not dealt with the issue of nonconvergence to saddle points. The recent work \cite{daneshmand2018second} considered discrete-time distributed gradient descent with constant step size and demonstrated convergence to a neighborhood of a second-order stationary point under relatively mild assumptions.

In this work we focus on continuous-time dynamics and consider the problem of characterizing the stable manifold for the distributed gradient descent process
\begin{equation} \label{dynamics_CT}
\dot \vx_n(t) = \beta_t \sum_{\ell \in \Omega_n} (\vx_\ell(t) -\vx_n(t)) - \alpha_t \grad f_n(\vx_n(t)),
\end{equation}
$n=1,\ldots,N$, where $\alpha_t$, and $\beta_t$ are time-varying (decaying) weight parameters, and $\Omega_n$ is the set of agents neighboring agent $n$ in the underlying communication graph. Intuitively, the dynamics \eqref{dynamics_CT} may be understood as follows: The \emph{consensus} term $\beta_t\sum_{\ell \in \Omega_n} (\vx_\ell(t) -\vx_n(t))$ encourages agents to seek agreement with neighboring agents. The \emph{innovation} term $-\alpha_t \grad f_n(\vx_n(t))$ encourages each agent to descend the gradient of their local objective function. By appropriately controlling the decay rates of $\alpha_t$ and $\beta_t$ one can balance the dual objectives of ensuring that agents reach asymptotic consensus while simultaneously seeking optima of \eqref{eqn:f-def}. The process \eqref{dynamics_CT} is a consensus + innovations variant of gradient descent \cite{kar2012distributed}.

We remark that closely related discrete-time variants of distributed gradient descent were studied in \cite{nedic2009distributed,nedic2010constrained,nedic2014distributed} for distributed optimization of a convex function. This was extended to the distributed nonconvex setting in \cite{bianchi2012convergence} where convergence to critical points was shown.
The work \cite{swenson2019CDC} considered a distributed simulated annealing algorithm that ensures convergence to the set of global minima. However, the algorithm requires careful control of the annealing noise.
We also remark that the recent work \cite{hong2018gradient} considered a discrete-time primal dual algorithm for distributed nonconvex optimization and showed convergence to second-order stationary points, but did not consider distributed gradient descent.

Our first main result will be to show that the dynamics \eqref{dynamics_CT} converge to critical points of $f$ (see Theorem \ref{thrm:cont-conv-cp}).
Our second main result will be to prove a stable-manifold theorem for \eqref{dynamics_CT} that characterizes nonconvergence to saddle points (see Theorem \ref{thrm:non-convergence}).
Together, these results show that (under appropriate assumptions) the dynamics \eqref{dynamics_CT} typically converge to local minima of \eqref{eqn:f-def}.

\subsection{Main Results}
\subsubsection{Assumptions}
We will make the following general assumptions.

The first assumption pertains to the communication network.
\begin{assumption} \label{a:G-connected-undirected}
The graph $G=(V,E)$ is undirected and connected.
\end{assumption}
(See Section \ref{sec:notation} for further discussion of the communication network.)
The next three assumptions apply to the local objectives $f_n$, $n=1,\ldots,N$.
\begin{assumption} \label{a:f-differentiable}
$f_n:\R^d\to \R$ is of class $C^2$.
\end{assumption}
\begin{assumption} \label{a:lip-grad}
$\grad f_n$ is Lipschitz continuous,
\end{assumption}
\begin{assumption} \label{a:coercive}
$f_n$ is coercive.
\end{assumption}
We refer to the time-varying weights $\beta_t$ and $\alpha_t$ in \eqref{dynamics_CT} as the \emph{consensus} and \emph{innovation} potentials respectively.
We assume the consensus and innovation potentials take the following form.
\begin{assumption} \label{a:step-size-CT}
$\alpha_t = (t+1)^{-\tau_\alpha}$ and $\beta_t = (t+1)^{-\tau_\beta}$, with $0\leq \tau_\beta < \tau_\alpha \leq 1$.
\end{assumption}

When developing our stable-manifold theorem for \eqref{dynamics_CT} we will consider the behavior of the dynamics near some fixed saddle point $x^*$. We will assume that the saddle point satisfies the following non-degeneracy assumption.
\begin{assumption} \label{a:saddle-point}
$x^*$ is a nondegenerate saddle point of $f$. That is, the Hessian $\nabla^2 f(x^*)$ is nonsingular.
\end{assumption}

\subsubsection{Main Results}
We now state the main results of the paper. First, we show that the dynamics \eqref{dynamics_CT} converge to the set of critical points of \eqref{eqn:f-def}.
\begin{theorem}\label{thrm:cont-conv-cp}
Suppose $(\vx_n(t))_{n=1}^N$ is a solution to \eqref{dynamics_CT} with arbitrary initial condition and suppose that Assumptions \ref{a:G-connected-undirected}--\ref{a:step-size-CT} hold.  Then for each $n=1,\ldots,N$,
\begin{itemize}
  \item [(i)]  Agents achieve consensus in the sense that $\lim_{t\to\infty} \|\vx_n(t) - \vx_\ell(t)\| = 0$, for $\ell=1,\ldots,N$.
  \item [(ii)] $\vx_n(t)$ converges to the set of critical points of $f$.
\end{itemize}
\end{theorem}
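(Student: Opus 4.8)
The plan is to split the analysis into a consensus estimate and a gradient-descent estimate, exploiting the time-scale separation $\tau_\beta < \tau_\alpha$ built into Assumption~\ref{a:step-size-CT}. Introduce stacked notation: let $\vx(t) = (\vx_1(t),\dots,\vx_N(t)) \in \R^{Nd}$, let $L$ be the graph Laplacian of $G$, and write the dynamics as $\dot\vx = -\beta_t(L\otimes I_d)\vx - \alpha_t F(\vx)$, where $F(\vx) = (\grad f_1(\vx_1),\dots,\grad f_N(\vx_N))$. Decompose $\vx(t) = \vxavg(t)\otimes\ones + \vxperp(t)$, where $\vxavg(t) = \frac1N\sum_n \vx_n(t)$ is the network average (the component in the consensus subspace $\ones\otimes\R^d$) and $\vxperp(t)$ is the disagreement component orthogonal to it. Because $G$ is connected (Assumption~\ref{a:G-connected-undirected}), $L$ is positive definite on the orthogonal complement of $\ones$, with smallest nonzero eigenvalue $\lambda_2 > 0$.

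\emph{Step 1 (consensus, part (i)).} The goal is to show $\|\vxperp(t)\|\to 0$. Project the dynamics onto the disagreement subspace to obtain $\dot\vxperp = -\beta_t(L\otimes I_d)\vxperp - \alpha_t F^\perp(\vx)$, where $F^\perp$ denotes the projection of $F$. A Lyapunov/comparison argument on $V(t) = \tfrac12\|\vxperp(t)\|^2$ gives $\dot V \le -\beta_t\lambda_2 V + \alpha_t \|F^\perp(\vx)\|\,\|\vxperp\|$. To close this I first need an a~priori bound: coercivity of each $f_n$ (Assumption~\ref{a:coercive}) together with Lipschitz $\grad f_n$ (Assumption~\ref{a:lip-grad}) should be used to show the trajectory remains bounded (e.g.\ by controlling $\ddt \sum_n f_n(\vx_n)$ along the flow and absorbing the consensus cross-terms), hence $\|F(\vx(t))\|$ is bounded by some constant $C$. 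Then $\dot V \le -\beta_t\lambda_2 V + \alpha_t C\sqrt{2V}$, and since $\alpha_t/\beta_t = (t+1)^{-(\tau_\alpha-\tau_\beta)}\to 0$ with $\tau_\alpha > \tau_\beta$, a singular-perturbation / Grönwall estimate yields $\|\vxperp(t)\| = O(\alpha_t/\beta_t)\to 0$. This gives (i), since $\|\vx_n(t)-\vx_\ell(t)\| \le 2\|\vxperp(t)\|$.

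\emph{Step 2 (convergence to critical points, part (ii)).} Now track the average. Summing \eqref{dynamics_CT} over $n$, the consensus terms cancel (undirected graph), giving $\dot\vxavg = -\tfrac{\alpha_t}{N}\sum_{n=1}^N \grad f_n(\vx_n)$. Write $\sum_n \grad f_n(\vx_n) = \grad f(\vxavg) + \sum_n\big(\grad f_n(\vx_n) - \grad f_n(\vxavg)\big)$; the error term is bounded in norm by $\mathrm{Lip}(\grad f)\sqrt{N}\,\|\vxperp(t)\|$, which is summable-to-zero by Step~1. Rescale time by $s = \int_0^t \alpha_r\,dr$ (which $\to\infty$ since $\tau_\alpha \le 1$) to convert $\vxavg$ into a perturbed gradient flow $\tfrac{d}{ds}\vy = -\grad f(\vy) + e(s)$ with $e(s)\to 0$. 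Then invoke a LaSalle-type argument for asymptotically autonomous gradient systems: $f$ is a strict Lyapunov function, $f(\vxavg(t))$ is bounded below (coercivity) and eventually decreasing up to the vanishing perturbation, so $f(\vxavg(t))$ converges and $\vxavg(t)$ approaches the set of critical points of $f$; combined with Step~1, each $\vx_n(t)$ does too.

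\emph{Main obstacle.} The delicate point is the interplay in Step~1 and Step~2: the consensus estimate needs boundedness of the gradients, which in turn needs the trajectory to stay in a compact set, while the cleanest boundedness argument wants to use that the average is doing (approximate) gradient descent on a coercive function — so there is a mild circularity to untangle. I would resolve it by first establishing boundedness directly from an energy estimate on $E(t) = \sum_n f_n(\vx_n(t)) + \tfrac{c}{2}\|\vxperp(t)\|^2$ for suitable $c>0$, showing $\ddt E(t) \le 0$ (or $\le$ an integrable positive quantity) using Assumption~\ref{a:step-size-CT} to dominate cross-terms, which pins the whole trajectory in a sublevel set without assuming the conclusion. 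The remaining estimates (Grönwall for $\vxperp$, time-change and LaSalle for $\vxavg$) are then routine.
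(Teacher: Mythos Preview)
Your proposal is correct and follows essentially the same two-step architecture as the paper: establish consensus via the time-scale separation $\tau_\beta<\tau_\alpha$, then view the average as a perturbed gradient flow after rescaling time by $\int_0^t\alpha_r\,dr$. The only substantive difference is in Step~1: the paper first time-changes by $\int_0^t\beta_r\,dr$ (so the consensus term becomes autonomous) and then applies variation of parameters with the fundamental matrix $e^{-(L\otimes I_d)t}$, whereas you work directly with a Lyapunov/Gr\"onwall inequality for $\|\vxperp\|$; both routes yield the same $O(\alpha_t/\beta_t)$ decay. For Step~2 the paper invokes a stochastic-approximation result on perturbed solutions together with Sard's theorem (to ensure $f(\Lambda)$ has empty interior), which is exactly the rigorous version of the LaSalle-type statement you describe. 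Notably, you are more explicit than the paper about the boundedness issue: the paper simply asserts that Assumptions~\ref{a:lip-grad} and~\ref{a:coercive} give a uniform bound on the gradient terms without spelling out the a~priori argument, so your proposed energy estimate is a genuine clarification rather than a departure.
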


Our second main result will refine this convergence guarantee.  The next result shows that the critical point reached by \eqref{dynamics_CT} will not typically be a saddle point. We show the following stable-manifold theorem for \eqref{dynamics_CT}.
\begin{theorem} \label{thrm:non-convergence}
Suppose that Assumptions \ref{a:G-connected-undirected}--\ref{a:step-size-CT} hold and suppose that $x^*$ is a saddle point of $f$ satisfying Assumption \ref{a:saddle-point}. Let $p$ denote the number of negative eigenvalues of the Hessian $\nabla^2 f(x^*)$. Then for all $t_0$ sufficiently large there exist a manifold $S\subset \R^{Nd}$ with dimension $(Nd-p)$ such that the following holds: A solution $(\vx_n(t))_{n=1}^N$ to \eqref{dynamics_CT} converges to $x^*$ in the sense that $\vx_n(t)\to x^*$ for some $n$, if and only if $(\vx_n(t))_{n=1}^N$ is initialized on $S$, i.e., $(\vx_n(t_0))_{n=1}^N = x_0\in\R^{Nd}$ with $x_0\in S$.
\end{theorem}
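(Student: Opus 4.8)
The plan is to reduce the distributed dynamics \eqref{dynamics_CT} to a form where a suitable (non-autonomous) version of the classical stable-manifold theorem can be applied near $x^*$, and then to translate the resulting invariant manifold back to the original coordinates. First I would rewrite \eqref{dynamics_CT} in stacked form $\dot{\vx} = -\beta_t (L\otimes I_d)\vx - \alpha_t F(\vx)$, where $\vx = (\vx_1,\ldots,\vx_N)\in\R^{Nd}$, $L$ is the graph Laplacian, and $F(\vx) = (\grad f_1(\vx_1),\ldots,\grad f_N(\vx_N))$. Decompose $\R^{Nd}$ into the consensus subspace $\mathcal{C} = \{\ones\otimes z : z\in\R^d\}$ and its orthogonal complement $\mathcal{C}^\perp$, and introduce the change of variables $(\xavg, \xperp)$, where $\xavg = \frac1N\sum_n \vx_n$ is the average (living in the $d$-dimensional consensus direction) and $\xperp$ is the disagreement component. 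In these coordinates the $\xperp$-dynamics are dominated by the strongly contracting term $-\beta_t \lambda_2(L)\xperp$ (by Assumption \ref{a:G-connected-undirected}, $\lambda_2(L)>0$), while the $\xavg$-dynamics, after rescaling time by $s = \int_{t_0}^t \alpha_r\,dr$ (which diverges since $\tau_\alpha\le 1$, by Assumption \ref{a:step-size-CT}), look like a perturbed gradient flow $\frac{d\xavg}{ds} = -\grad f(\xavg) + (\text{lower-order terms involving }\xperp \text{ and }\beta_t/\alpha_t)$.

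Next I would linearize about the equilibrium $(\xavg,\xperp) = (x^*, 0)$. The linearization in the rescaled time splits (to leading order) into a block governed by $-\grad^2 f(x^*)$ on the $d$-dimensional average coordinate — which, by Assumption \ref{a:saddle-point}, is hyperbolic with exactly $p$ unstable directions and $d-p$ stable directions — and a block on the $(N-1)d$-dimensional disagreement coordinate that is uniformly exponentially stable because of the consensus term (here I would use $\tau_\beta < \tau_\alpha$ to ensure the consensus contraction dominates in the rescaled timescale). Thus the linearized system has $p$ unstable directions and $Nd - p$ stable directions. Since the weights $\alpha_t,\beta_t$ are time-varying, I cannot invoke the autonomous stable-manifold theorem directly; instead I would appeal to a non-autonomous / asymptotically-autonomous version (e.g., via the Hadamard--Perron graph-transform method, or by citing the form of the stable-manifold theorem for asymptotically autonomous systems), using that $\beta_t/\alpha_t\to 0$ and the rescaled vector field converges to the autonomous gradient field, so the hyperbolic splitting persists. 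This yields, for each sufficiently large $t_0$, a local invariant manifold $S_{\mathrm{loc}}$ of dimension $Nd-p$ consisting exactly of initial conditions whose forward trajectory converges to $(x^*,0)$.

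Then I would globalize: by Theorem \ref{thrm:cont-conv-cp}, every trajectory converges to the critical set and achieves consensus, so a trajectory converging to $x^*$ in the sense $\vx_n(t)\to x^*$ for some $n$ is equivalent to $(\xavg(t),\xperp(t))\to(x^*,0)$; such a trajectory must eventually enter any neighborhood of $(x^*,0)$, hence eventually lies on $S_{\mathrm{loc}}$, and by invariance its time-$t_0$ value lies on the backward flow of $S_{\mathrm{loc}}$. Taking $S$ to be the union over the (time-$t_0$-to-time-$t$) flow maps of $S_{\mathrm{loc}}$ (which are diffeomorphisms, so dimension is preserved) gives the global manifold of dimension $Nd-p$ with the stated if-and-only-if property. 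Transforming back from $(\xavg,\xperp)$ to $(\vx_n)_{n=1}^N$ is a linear change of coordinates and preserves the manifold structure and dimension.

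The main obstacle I anticipate is handling the \emph{time-varying} coefficients rigorously: the classical stable-manifold theorem is autonomous, and here the effective vector field in rescaled time only converges to the gradient field asymptotically, with the disagreement block contracting at a time-dependent rate $\beta_t\lambda_2(L)$ that itself decays. One must verify a uniform-in-$t_0$ hyperbolicity/exponential-dichotomy estimate — quantifying precisely how $\tau_\beta<\tau_\alpha$ makes the consensus contraction beat both the decay of $\beta_t$ and any coupling from the $\xavg$ block — and then invoke (or prove) a stable-manifold theorem valid under such asymptotic hyperbolicity. This is where the bulk of the technical work lies; the rescaling of time and the consensus/disagreement decomposition are the conceptual devices that make it tractable.
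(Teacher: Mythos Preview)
Your overall strategy---rescale time by $\alpha_t$, split into consensus/disagreement, exhibit a hyperbolic splitting, and invoke a stable-manifold construction---parallels the paper's, but two concrete gaps would have to be closed before it goes through.

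First, $(\xavg,\xperp)=(x^*,0)$ is \emph{not} an equilibrium of the rescaled system. Criticality of $x^*$ gives $\frac{1}{N}\sum_n\nabla f_n(x^*)=0$, but the individual gradients $\nabla f_n(x^*)$ need not vanish, so the projection of $(\nabla f_n(x^*))_n$ onto the disagreement subspace is generically nonzero and the $\xperp$-equation carries a nonvanishing inhomogeneous term at your proposed linearization point. The paper handles this by using the implicit function theorem to produce, for each $\beta$, a critical point $g(\beta)$ of the penalized function $h(x)+\beta x^{T}Qx$ with $g(\beta)\to 0$ as $\beta\to\infty$, and then recenters the dynamics around the \emph{moving} point $g(\beta_t)$; the residual forcing $g'(\beta_t)\dot\beta_t$ is small and is carried explicitly through the subsequent construction.

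Second, your statement that $\beta_t/\alpha_t\to 0$ is reversed: under Assumption~\ref{a:step-size-CT} one has $\beta_t/\alpha_t=(t+1)^{\tau_\alpha-\tau_\beta}\to\infty$. That divergence is precisely what makes the disagreement block contract (as you correctly note elsewhere), but it also means the rescaled vector field does \emph{not} converge to an autonomous limit---the linear coefficient on $\xperp$ blows up---so an off-the-shelf ``asymptotically autonomous'' stable-manifold theorem is not available. The paper therefore does not cite such a result; instead it diagonalizes the time-dependent linear part $A(t)=\nabla^2\bigl(h+\beta_t x^{T}Qx\bigr)\big|_{x=g(\beta_t)}$ by a $C^1$ family of unitaries $U(t)$, establishes exponential-dichotomy bounds for the resulting stable/unstable evolution operators $V^s,V^u$, and builds the stable manifold by the Lyapunov--Perron integral-equation method (a contraction-mapping argument on a space of bounded trajectories, with the forcing term absorbed into an auxiliary function $c(t)$). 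This direct construction is where the technical work sits, and it is what replaces the black-box invocation you anticipate.
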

When we say that $S$ has dimension $Nd-p$ we mean that $S$ is the graph of a continuous function from a $Nd-p$ dimensional domain.
Note that in the above theorem, since we deal with a nondegenerate saddle point of $f$, we must have $p\geq 1$. Thus, $S$ has dimension at most $Nd-1$ and is indeed a ``low-dimensional surface.'' The initial time $t_0$ in the above theorem depends on the weight processes $\alpha_t$ and $\beta_t$. This time may be equivalently taken to be zero by using alternate weight sequences $\hat \alpha_t = \alpha_{t+t_0}$ and $\hat \beta_t = \beta_{t+t_0}$.

The value of Theorems \ref{thrm:cont-conv-cp} and \ref{thrm:non-convergence} together are that they allow us to conclude that the dynamics \eqref{dynamics_CT} ``typically'' converge to local minima of $f$ (assuming Assumptions \ref{a:G-connected-undirected}--\ref{a:step-size-CT} hold and every saddle point of $f$ satisfies Assumption \ref{a:saddle-point}). More precisely,
Theorem \ref{thrm:cont-conv-cp} tells us  that the dynamics \eqref{dynamics_CT} will converge to critical points of $f$. Theorem \ref{thrm:non-convergence} tells us that this limit point \emph{must} be a local minimum\footnote{In the event that $\vx(t)$ does not have a unique limit, then it converges to a connected set of local minima.} unless $(\vx_n(t))_{n=1}^N$ is initialized from the special set of initial conditions $\bigcup_{x^*} S_{x^*}$, where the (countable) union is taken over the set of all saddle points, and each $S_{x^*}$ is the low-dimensional stable manifold associated with the saddle point $x^*$.

It is also important to remark that a shortcoming of Theorem \ref{thrm:non-convergence} is that it does not show that $S$ is a smooth $C^1$ surface. This will be the subject of future work.

The remainder of the paper is organized as follows. Section \ref{sec:notation} sets up notation and reviews background material. 
Section \ref{sec:conv-to-cp} proves Theorem \ref{thrm:cont-conv-cp}. Section \ref{sec:nonconvergence} proves Theorem \ref{thrm:non-convergence}. Finally, Section \ref{sec:conclusion} concludes the paper.

\section{Notation} \label{sec:notation}
Let $C^k(\R^{d_1}; \R^{d_2})$ denote the set of all $k$-times continuously differentiable functions from $\R^{d_1}$ to $\R^{d_2}$. When the dimensions of domain and codomain are clear, we will simply say that a function belongs to $C^k$.
Given a function $f\in C^2$, we let $\nabla f(x)$ denote the gradient of $f$ and let $\nabla^2 f(x)$ denote the Hessian.
Unless otherwise stated, $\|\cdot\|$ refers to the standard Euclidean norm. Given a point $x\in\R^m$ and $r>0$ let $B_r(x)$ denote the open ball of radius $r>0$ about $x$. We use the notation $I_m$ to denote the $m\times m$ identity matrix. Given a matrix $A$, $\calN(A)$ denotes the nullspace of $A$.  Given a set of numbers $\{a_1,\ldots,a_m\}$ let $\text{diag}(a_1,\ldots,a_m)$ be the $m\times m$ diagonal matrix with diagonal entries $a_1,\ldots,a_m$.

We say that a continuous mapping $\vx:I \to \R^d$, over some interval $I =[0,T)$, $0 < T \leq \infty$, is a solution to an ODE with initial condition $x_0$ at time $t_0$ if $\vx \in C^1$, $\vx$ satisfies the ODE for all $t\in I$, and $\vx(t_0) = x_0$. We note that under Assumption \ref{a:lip-grad}, solutions to \eqref{dynamics_CT} exist and are unique \cite{coddington1955theory}.

In Assumption \ref{a:G-connected-undirected} we assume that the inter-agent communication graph may be described by an undirected graph $G=(V,E)$, where $V=\{1\cdots N\}$ denotes the set of nodes (or agents) and~$E$ denotes the set of communication links (edges), between agents. The pair $(n,l)\in E$ if and only if there exists an edge between nodes~$n$ and~$l$. In this paper we will consider simple graphs, i.e., graphs devoid of self-loops and multiple edges. The set of neighbors of node~$n$ is given by
\begin{equation}
\label{def:omega} \Omega_{n}=\left\{l\in V\,|\,(n,l)\in
E\right\}. 
\end{equation}
The degree of node~$n$ is given by $d_{n}=|\Omega_{n}|$.
The adjacency matrix of the graph $G$ is the $N\times N$ matrix $A=\left[A_{nl}\right]$, with $A_{nl}=1$, if $(n,l)\in E$, $A_{nl}=0$, otherwise. The degree matrix is given by the diagonal matrix $D=\mbox{diag}\left(d_{1},\ldots,d_{N}\right)$. The positive semidefinite matrix $L=D-A$ is referred to as the graph Laplacian matrix. The eigenvalues of $L$ can be ordered as $0=\lambda_{1}(L)\leq\lambda_{2}(L)\leq\cdots\leq\lambda_{N}(L)$.
A graph is said to be connected if there exists a path between each pair of nodes.
If the graph $G$ is connected then $\lambda_{2}(L)>0$ \cite{chung1997spectral}.

\subsection{Stochastic Approximation and Perturbed Solutions} \label{section:prelim-perturbed-solns}
Some of our proof techniques will utilize results on perturbed solutions to differential equations from the theory of stochastic approximation. We briefly review relevant results from the literature now.

We will be interested in studying (possibly perturbed) solutions of the differential equation
\begin{equation} \label{eqn:generic-ode}
\dot \vx = F(\vx),
\end{equation}
where $F:\R^d\to \R$ is $C^1$.
We will consider the following notion of a perturbed solution.
\begin{definition} [Perturbed Solution] \label{def:perturbed-soln}
A continuous function $\vy:[0,\infty)\to \R^m$ will be called a perturbed solution to \eqref{eqn:generic-ode} if:\\
\begin{enumerate}
    \item $\vy$ is absolutely continuous,
    \item There exists a locally integrable function $t\mapsto U(t)$ such that for every $T>0$ there holds
    \begin{enumerate}
        \item
        $$
        \lim_{t\to\infty} \sup_{0\leq v\leq T} \Big| \int_t^{t+v} U(s)\ds \Big| = 0
        $$
        \item
        $$
        \ddt \vy(t) - U(t) = F(\vy(t))
        $$
        for almost every $t>0$.
    \end{enumerate}
\end{enumerate}
\end{definition}

Let $\Lambda \subset \R^d$; we say that a continuous function $V:\R^d\to \R$ is a Lyapunov function for $\Lambda$ if for any solution $\vx:\R\to \R^d$ of  \eqref{eqn:generic-ode} , $\ddt V(\vx(t))=0$ for $\vx(t) \in \Lambda$ and $\ddt V(\vx(t))<0$ for $\vx(t)\notin \Lambda$.

The following result (see Theorem 3.6 and Proposition 3.27 in \cite{benaim2005stochastic}) characterizes the asymptotic behavior of perturbed solutions to ODEs admitting a Lyapunov function.
\begin{theorem} \label{thrm:BPS-limit}
Suppose $\vy$ is a perturbed solution to \eqref{eqn:generic-ode}. Suppose also that $V$ is a Lyapunov function for $\Lambda$ and that $V(\Lambda)$ has empty interior. Then the limit set of $\vy$, given by $L(\vy) := \cap_{t\geq 0} \cl(\{\vy(s):s\geq t\})$ is contained in $\Lambda$.
\end{theorem}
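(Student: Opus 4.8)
The plan is to follow the standard program from the theory of asymptotic pseudotrajectories, reducing the statement to two structural facts about the flow generated by $F$ and one fact about how a strict Lyapunov function constrains chain-transitive sets. Throughout, let $\Phi=\{\Phi_t\}$ denote the local flow induced by \eqref{eqn:generic-ode}; since $F\in C^1$ it is locally Lipschitz, so $\Phi$ is well defined and jointly continuous. The first step is to show that the perturbed solution $\vy$ tracks the genuine flow over bounded time windows, i.e. that for every $T>0$
$$
\lim_{t\to\infty}\ \sup_{0\le s\le T}\ \big\| \vy(t+s) - \Phi_s(\vy(t)) \big\| = 0 .
$$
To obtain this I would use condition 2(b) of Definition \ref{def:perturbed-soln} to write $\vy(t+s)=\vy(t)+\int_0^s\big(F(\vy(t+r))+U(t+r)\big)\dr$, subtract the corresponding integral identity for $\Phi_\cdot(\vy(t))$, and bound the difference $g(s):=\|\vy(t+s)-\Phi_s(\vy(t))\|$ by $\int_0^s\|F(\vy(t+r))-F(\Phi_r(\vy(t)))\|\dr$ plus the perturbation term $\sup_{0\le s\le T}\big|\int_t^{t+s}U(w)\,dw\big|$, which vanishes as $t\to\infty$ by condition 2(a). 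A Grönwall estimate with the local Lipschitz constant of $F$ on a compact set containing the relevant orbit pieces then absorbs the first term and yields the display.

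The second step is to record that the limit set $L(\vy)$ is nonempty, compact, connected, invariant under $\Phi$, and, crucially, internally chain transitive: for any $a,b\in L(\vy)$ and any $\e,T>0$ there is a finite $(\e,T)$-pseudo-orbit of $\Phi$ lying in $L(\vy)$ that joins $a$ to $b$. This is the general consequence of the tracking estimate of Step~1: one builds the pseudo-orbit by sampling $\vy$ along a sufficiently late window and replacing actual motion by true orbit arcs up to error $\e$, with connectedness and compactness following from the uniform tracking together with precompactness of the trajectory (available in our applications, where $\vy$ remains in a compact set).

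The third step is where the Lyapunov hypothesis enters, and I would carry it out in two moves. First, $V$ is constant on $L(\vy)$: along any orbit the map $t\mapsto V(\Phi_t(x))$ is nonincreasing, so for each $x\in L(\vy)$ the $\omega$-limit set $\omega(x)\subset L(\vy)$ carries the single value $v(x)=\lim_{t\to\infty}V(\Phi_t(x))$; since $\omega(x)$ is invariant with $V$ constant on it, $\frac{d}{dt}V$ vanishes identically there, and strict decrease off $\Lambda$ forces $\omega(x)\subset\Lambda$, whence $v(x)\in V(\Lambda)$. Chain transitivity then upgrades these pointwise limit values to a single constant $c$ on all of $L(\vy)$. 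Second, $V\equiv c$ on the invariant set $L(\vy)$ gives $\frac{d}{dt}V\equiv 0$ along every orbit it contains, so by strictness every point of $L(\vy)$ must lie in $\Lambda$; that is, $L(\vy)\subset\Lambda$.

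The delicate point, and the reason the hypothesis that $V(\Lambda)$ has empty interior is needed, is precisely the passage from ``constant on each $\omega$-limit'' to ``constant on all of $L(\vy)$.'' Naive telescoping of $V$ along an $(\e,T)$-chain fails because the number of hops is unbounded, so the accumulated jump error need not vanish as $\e\to0$. The empty-interior hypothesis repairs this: since all attained limit values $v(x)$ lie in $V(\Lambda)$, they cannot fill an interval, so if $V$ took two distinct values on $L(\vy)$ one could select a regular value $r\notin V(\Lambda)$ strictly between them and use the level $V=r$ to split $L(\vy)$ into a proper $\Phi$-attractor and its complementary repeller, contradicting the fact that an internally chain transitive set admits no proper attractor. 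I expect formalizing this attractor/repeller splitting—rather than the Grönwall bookkeeping of Steps~1--2—to be the technical heart of the argument.
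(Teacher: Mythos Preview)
The paper does not supply a proof of Theorem~\ref{thrm:BPS-limit}; it is quoted without proof as a combination of Theorem~3.6 and Proposition~3.27 in \cite{benaim2005stochastic}. Your outline is a correct reconstruction of that argument: Step~1 is the asymptotic pseudotrajectory property, Step~2 is internal chain transitivity of the limit set, and Step~3 is the standard Lyapunov-plus-empty-interior argument forcing an internally chain transitive set into $\Lambda$.

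Two small remarks. First, in the opening half of your Step~3 the quantity $v(x)=\lim_{t\to\infty}V(\Phi_t(x))$ is not $V(x)$ itself, so establishing $v(x)\in V(\Lambda)$ for every $x$ and then that the $v(x)$ coincide does not by itself yield that $V$ is constant on $L(\vy)$; you correctly diagnose this gap in your final paragraph and supply the right repair via the attractor splitting at a level $r\notin V(\Lambda)$, which is exactly how the cited reference proceeds, so you should run that argument directly rather than as a patch. Second, you are right that precompactness of $\{\vy(t):t\ge 0\}$ is needed (for the Gr\"onwall step and for $L(\vy)$ to be nonempty and compact) and that the paper's statement omits it; in the paper's sole application of the theorem this is supplied by the coercivity Assumption~\ref{a:coercive}.
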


\section{Convergence to Critical Points} \label{sec:conv-to-cp}
In this section we will prove Theorem \ref{thrm:cont-conv-cp}.
We begin by showing the following preliminary lemma which shows that under the dynamics \eqref{dynamics_CT} agents reach asymptotic consensus.
\begin{lemma} \label{lemma:cont-consensus}
If $(\vx_n(t))_{n=1}^N$ is a solution to \eqref{dynamics_CT} then $\lim_{t\to\infty} \|\vx_n(t) - \vx_\ell(t)\| =0$ for all $\ell,n=1,\ldots,N$.
\end{lemma}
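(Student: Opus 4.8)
My plan is to analyze the disagreement dynamics directly. Let me write $\vx(t) = (\vx_1(t)^\top, \ldots, \vx_N(t)^\top)^\top \in \R^{Nd}$ and decompose each component into its average and its orthogonal (disagreement) part. Define the average $\vxavg(t) := \frac{1}{N}\sum_{n=1}^N \vx_n(t)$ and the disagreement vector $\vx^\perp(t)$ with blocks $\vx_n(t) - \vxavg(t)$. Since $G$ is connected and undirected, the graph Laplacian $L$ (tensored with $I_d$) is symmetric positive semidefinite with a one-dimensional kernel spanned by the consensus subspace $\{\vone \otimes z : z\in\R^d\}$, and on the orthogonal complement its smallest eigenvalue is $\lambda_2(L) > 0$. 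The consensus term $\beta_t (L\otimes I_d)\vx$ annihilates $\vxavg$ and contracts $\vx^\perp$; the gradient term $-\alpha_t \grad F(\vx)$ (where $F$ stacks the $f_n$) acts as a bounded-growth forcing term because $\grad f_n$ is Lipschitz (Assumption \ref{a:lip-grad}).

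The key computation is an energy estimate on $V(t) := \tfrac12 \|\vx^\perp(t)\|^2$. Differentiating along solutions and using that the consensus term restricted to the disagreement subspace satisfies $\langle \vx^\perp, (L\otimes I_d)\vx^\perp\rangle \geq \lambda_2(L)\|\vx^\perp\|^2$, I would obtain a differential inequality of the form
\begin{equation}
\ddt V(t) \leq -\beta_t \lambda_2(L)\, \|\vx^\perp(t)\|^2 + \alpha_t \|\vx^\perp(t)\|\, \|\grad F(\vx(t))^\perp\|,
\end{equation}
where $\grad F(\vx)^\perp$ is the projection of the stacked gradient onto the disagreement subspace. Using Lipschitzness of $\grad f_n$ one bounds $\|\grad F(\vx(t))^\perp\|$ by $C(1 + \|\vx^\perp(t)\|)$ — actually with some care, by $C\|\vx^\perp(t)\|$ plus a term controlled by how far $\vxavg$ is from a critical point; to keep things self-contained I would instead bound the gradient projection crudely and allow a term linear in $\sqrt{V}$. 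Then Young's inequality absorbs the cross term: $\alpha_t \sqrt{2V}\cdot C(1+\sqrt{2V}) \leq \tfrac12 \beta_t\lambda_2(L) V + (\text{terms of order } \alpha_t^2/\beta_t \text{ and } \alpha_t)$. Since $\tau_\beta < \tau_\alpha$ (Assumption \ref{a:step-size-CT}), we have $\alpha_t/\beta_t \to 0$, so eventually $\alpha_t$ is small relative to $\beta_t$ and the inequality becomes
\begin{equation}
\ddt V(t) \leq -c\, \beta_t\, V(t) + g(t)
\end{equation}
for some constant $c>0$ and a forcing term $g(t)$ that decays (it is of order $\alpha_t$ or $\alpha_t^2/\beta_t$). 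An integrating-factor / Grönwall argument with $\int^t \beta_s\,ds \to \infty$ (which holds since $\tau_\beta \leq 1$, indeed $\tau_\beta < 1$) then forces $V(t)\to 0$, which is exactly $\|\vx_n(t)-\vx_\ell(t)\|\to 0$.

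The main obstacle I anticipate is controlling the gradient forcing term: a priori the solution $\vx_n(t)$ could be unbounded, so $\|\grad F(\vx(t))\|$ need not be bounded, only Lipschitz-controlled. The clean way around this is to note that only the \emph{disagreement component} of the gradient enters the estimate, and to write $\grad f_n(\vx_n) = \grad f_n(\vxavg) + [\grad f_n(\vx_n) - \grad f_n(\vxavg)]$; the bracketed difference is bounded by $\text{Lip}(\grad f_n)\|\vx_n - \vxavg\| \le \text{Lip}\,\|\vx^\perp\|$, which is exactly the kind of term we want (it gives a $V$-proportional contribution that is absorbed once $\alpha_t \ll \beta_t$), while the remaining piece $\grad f_n(\vxavg)$, summed with the proper projection weights, still needs handling — either by a separate a priori bound on $\|\vxavg(t)\|$ growth (obtainable since $\ddt\|\vxavg\|^2$ is controlled by $\alpha_t\|\grad F\|$ and $\alpha_t\le 1$, giving at most polynomial growth, so $\alpha_t\|\grad F(\vxavg)\|$ still decays relative to $\beta_t$), or by invoking coercivity (Assumption \ref{a:coercive}) to argue solutions stay in a bounded region. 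I would carry out the a priori growth bound on $\vxavg$ first, then the energy estimate on $V$, then the Grönwall conclusion.
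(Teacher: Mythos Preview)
Your approach is correct but takes a genuinely different route from the paper. The paper first performs a time change $t \mapsto \int_0^t \beta_s\,ds$ so that the consensus part becomes autonomous, obtaining $\dot\vy = -(L\otimes I_d)\vy - \gamma_t(\grad f_n(\vy_n))_n$ with $\gamma_t = \alpha_{T(t)}/\beta_{T(t)} \to 0$; it then applies variation of parameters with the fundamental matrix $\Phi(t)=e^{-(L\otimes I_d)t}$ and bounds the disagreement by a convolution $\int_0^t e^{-\lambda_2(t-s)}\gamma_s\,ds\to 0$. You instead work directly with the energy $V=\tfrac12\|\vx^\perp\|^2$, derive the scalar inequality $\dot V \le -c\beta_t V + g(t)$, and finish by Gr\"onwall. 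The two arguments are essentially equivalent---your integrating-factor estimate and the paper's convolution bound are the same computation in different clothing---but the paper's version yields explicit decay rates a bit more transparently, while yours avoids the time change and would adapt more readily to time-varying or noncommuting Laplacians. Both proofs hinge on bounding the gradient forcing; the paper dispatches this in one line by invoking Assumptions~\ref{a:lip-grad} and~\ref{a:coercive} to assert $\|b(s)\|\le C\gamma_s$ (i.e., bounded trajectories), which is exactly the obstacle you identify and propose to handle the same way. Your diagnosis that the projection of $(\grad f_n(\vxavg))_n$ onto the disagreement subspace is generically nonzero (since the $f_n$ differ) is correct, and with boundedness in hand this is an $O(1)$ term that your Young/Gr\"onwall estimate absorbs.
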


\begin{proof}
The dynamics \eqref{dynamics_CT} may be expressed compactly as
\begin{equation} \label{eqn:non-homogenous-dynamics0}
\dot \vx = -\beta_t(L\otimes I_d) \vx - \alpha_t(\grad f_n(\vx))_{n=1}^N,
\end{equation}
where $\alpha_t$ and $\beta_t$ are as in Assumption \ref{a:step-size-CT}.

Let $S(\tau) = \int_{0}^\tau \beta_r \,dr$ and let $T(t)$ denote the inverse of $S(\tau)$. Let $\vy(t) = \vx(T(t))$. Using this time change we have the equivalent ODE
\begin{equation} \label{eqn:non-homogenous-dynamics}
\dot \vy(t) = -(L\otimes I_N)\vy(t) - \gamma_t(\nabla f_n(\vy(t)))_{n=1}^N,
\end{equation}
where $\gamma_t = \frac{\alpha_{T(t)}}{\beta_{T(t)}} \to 0$ as $\tau\to\infty$. Using the explicit form of $\alpha_t$ and $\beta_t$ in Assumption \ref{a:step-size-CT} it is readily verified that $\gamma_t \leq (t+1)^{-\tau_\gamma}$ for some $\tau_\gamma>0$.

We will refer to the set
$$
\calC := \{x\in \R^{Nd}: x = \ones_N \otimes a,\text{ for some } a\in \R^d\}
$$
as the \emph{consensus subspace}.
Consider the linear system
\begin{equation}\label{eqn:homogenous-dynamics}
\dot \vy = -(L\otimes I_d)\vy.
\end{equation}
Because $(L\otimes I_d)$ is positive semidefinite with nullspace equal to $\calC$, solutions to \eqref{eqn:homogenous-dynamics} converge to $\calC$ and hence $\lim_{t\to\infty} \|\vy_n(t) - \vy_\ell(t)\| = 0$ for all $n,\ell=1,\ldots,N$.

Let $\Phi(t) = e^{-(L\otimes I_d)t}$ denote a fundamental matrix solution of the linear system \eqref{eqn:homogenous-dynamics}.  By variation of parameters \cite{lakshmikantham1969differential}, the solution $\vy(t)$ of \eqref{eqn:non-homogenous-dynamics}
with initial condition $x_0\in \R^{Nd}$ may be expressed as
\begin{equation} \label{eqn:var-of-params}
\vy(t) = \Phi(t)x_0 + \int_{0}^t \Phi(t-s)b(s)\ds,
\end{equation}
where $b(s) = -\gamma_s(\grad f_n(\vy_n(s)))_{n=1}^N$. Using Assumptions \ref{a:lip-grad} and \ref{a:coercive} we see that $\|b(s)\| \leq \gamma_sC$ for some constant $C>0$.

Let
\begin{equation}
\vyavg(t) := \frac{1}{N} \sum_{n=1}^N \vy_n(t).
\end{equation}
Using  \eqref{eqn:var-of-params} we have
\begin{align}
\vy^\perp(t) & := \vy(t) - (\ones_N\otimes I_d)\vyavg(t)\\
& =\Phi(t)x_0 - (\ones_N\otimes I_d)\frac{1}{N} \sum_{n=1}^N \left[ \Phi(t)x_0\right]_n\\
&~~ + \int_0^t \Phi(t-s)\left( b(s) - (\ones_N\otimes I_d)\frac{1}{N}\sum_{n=1}^N \left[b(s) \right]_n \right),
\end{align}
where we have used the notation $[\cdot]_n$ to indicate extracting the vector of coordinates in $\R^d$ corresponding to agent $n$. Using the previous bound on $[b(t)]_n$ we get
\begin{align} \label{eq_consensus_conv}
\|\vy^\perp(t)\| \leq & \|\Phi(t)x_0 - \frac{1}{N} \sum_{n=1}^N \left[ \Phi(t)x_0\right]_n\|\\
& + C\int_0^t \|\Phi(t-s)\gamma_s\|\ds,
\end{align}
for some $C>0$.
The first term on the right hand side above goes to zero since $\Phi(t)x_0$ is a solution to \eqref{eqn:homogenous-dynamics}.
Recalling that $\Phi(t) = e^{-(L\otimes I_d)t}$, the second term above is bounded as
\begin{align}
\int_0^t \|\Phi(t-s)\gamma_s\| & = \int_0^t \|e^{-(L\otimes I_d)(t-s)}\|\gamma_s\\
& \leq C\int_0^t e^{-\lambda_2(t-s)}s^{-\tau_\gamma}\ds,
\end{align}
for some $C>0$,
where $\lambda_2>0$ is the second smallest eigenvalue of $L$.
Since $\tau_\gamma > 0$, this converges to zero as $t\to\infty$.
\end{proof}

We now prove Theorem \ref{thrm:cont-conv-cp}.
\begin{proof}[Proof (Theorem \ref{thrm:cont-conv-cp})]
Part (i) of the theorem follows from Lemma \ref{lemma:cont-consensus}. We now prove part (ii) of the theorem.
Let $S(\tau) = \int_{0}^\tau \alpha_r \,dr$ and let $T(t)$ denote the inverse of $S(\tau)$ so that $T(S(\tau)) = \tau$. Letting $\vy_n(t) = \vx_n(T(t))$ we have
\begin{equation} \label{eq:ODE-alt-form1}
\dot \vy_n(t) = \gamma_t\sum_{\ell\in\Omega_n} (\vy_\ell(t) - \vy_n(t)) - \nabla f_n(\vy_n(t)),
\end{equation}
$n=1,\ldots,N$,
where $\gamma_t = \frac{\beta_{T(t)}}{\alpha_{T(t)}} \to \infty$ as $t\to\infty$. Since \eqref{eq:ODE-alt-form1} is equivalent to \eqref{dynamics_CT} up to a time change, we will prove the result for solutions to \eqref{eq:ODE-alt-form1}.

By Lemma \ref{lemma:cont-consensus}, it is sufficient to show that the mean process, $\vyavg(t)$, converges to the set of critical points of $f$.
Noting that $\sum_{n=1}^N\sum_{\ell\in\Omega_n}(\vy_\ell(t) - \vy_n(t)) = 0$ (because $G$ is undirected), the average dynamics may be expressed as
\begin{align}
\dot{\vy}_{\textup{avg}}(t) & = \frac{1}{N}\sum_{n=1}^N \dot \vy_n(t)\\
& = \frac{1}{N}\sum_{n=1}^N \left( \gamma_t\sum_{\ell\in\Omega_n}(\vy_\ell(t) - \vy_n(t)) - \grad f_n(\vy_n(t)) \right) \\
& =  -
\frac{1}{N}\sum_{i=1}^N \bigg( \Big(\grad f_n(\vy_n(t)) - \grad f_n(\vyavg(t)) \Big)\\
& \quad\quad  + \grad f_n(\vyavg(t)) \bigg)
\end{align}
\begin{align}
& = - \frac{1}{N}\sum_{i=1}^N \grad f_n(\vyavg(t)) + \vr(t)\\
& = - \grad f(\vyavg(t)) + \vr(t), \label{eqn:avg-ode}
\end{align}
where $\vr(t) =  -\frac{1}{N}\sum_{i=1}^N \Big(\grad f_n(\vy_n(t)) - \grad f_n(\vyavg(t)) \Big)$.

By Assumptions \ref{a:lip-grad} and \ref{a:coercive} we see that $\vr(t)\to 0$ as $t\to\infty$.
Recalling Definition \ref{def:perturbed-soln}, solutions to \eqref{eqn:avg-ode} may be viewed as perturbed solutions of the ODE
\begin{equation} \label{eqn:avg-autonomous-unperturbed}
\dot \vy = -\grad f(\vy).
\end{equation}
Let $\Lambda$ denote the set of critical points of $f$.
Since $f\in C^2$, Sard's theorem implies that $f(\Lambda)$ has empty interior.
By Theorem \ref{thrm:BPS-limit}, solutions to \eqref{eqn:avg-ode} converge to the critical points set of $f$.
\end{proof}

\section{Nonconvergence to Saddle Points}\label{sec:nonconvergence}

\subsection{Generalized Problem Setup}
It will simplify the presentation and proofs if we consider a slight generalization of the distributed optimization framework. Namely, we will consider the distributed optimization problem as a special case of subspace constrained optimization. To this end, let $M\geq 1$ denote the dimension of the ambient space, let $h:\R^{M}\to\R$ be a $C^2$ function, and let $Q\in \R^{M\times M}$ be a positive semidefinite matrix. Consider the following optimization problem
\begin{align}
\min_{x\in\R^m} & \quad h(x)\\
\text{subject to} &\quad  x\in\calN(Q),
\end{align}
and the following dynamics for addressing this problem
\begin{equation} \label{eq:ODE1}
\dot \vx(t) =  - \nabla h(\vx(t)) -\beta_t Q \vx(t),
\end{equation}
where $\beta_t$ is some pre-specified weight function of class $C^1$ satisfying $\beta_t \to \infty$ as $t\to\infty$.

Note that the dynamics \eqref{eq:ODE1} may be viewed as $\dot \vx(t) = -\nabla_x \left( h(\vx(t)) + \vx^TQ\vx(t)\right)$, i.e., as $\beta_t\to\infty$, $\vx(t)$ is forced towards the constraint set.

Under Assumptions \ref{a:G-connected-undirected}--\ref{a:step-size-CT}, \eqref{dynamics_CT} is a special case of \eqref{eq:ODE1}. To see this, first observe that \eqref{dynamics_CT} (or rather, \eqref{eqn:non-homogenous-dynamics0}) is equivalent to the following ODE after a time change
$$
\dot \vx = -\beta_t(L\otimes I_d) \vx - (\grad f_n(\vx))_{n=1}^N,
$$
where $\beta_t \to \infty$. This fits the template of \eqref{eq:ODE1} where we let $M=Nd$, let $h:\R^{Nd}\to \R$ be given by the sum function\footnote{Note that this differs from \eqref{eqn:f-def} in that we permit the arguments of $f_n$ to differ.} $h(x) = \sum_{n=1}^N f_n(x_n)$, and let $Q = L\otimes I_d$.

Within this generalized framework, we would like to capture Assumption \ref{a:saddle-point}.
To this end, let $\calC = \calN(Q)$; we say that a point $x^*\in \mathcal{C}$ is a critical point of the restricted function $h\vert_{\mathcal{C}}$ if $\nabla h\vert_{\calC}(x^*) = 0$, where $\nabla h\vert_{\calC}(x^*)\in \R^m$ is taken with respect to some orthonormal basis of $\calC$, and $m=\dim\calC$.
Let $\nabla^2 h\vert_{\calC}(x^*)\in \R^{m\times m}$ denote the Hessian of $h\vert_{\calC}$ taken with respect to some orthonormal basis of $\calC$. We say that $x^*$ is a nondegenerate saddle point of $h\vert_{\calC}$ if $\det \nabla^2 h\vert_{\calC}(x^*) \not= 0$, and $\nabla^2 h\vert_{\calC}(x^*)$ has at least one positive and one negative eigenvalue.

The following theorem demonstrates the existence of stable manifolds for \eqref{eq:ODE1}.
\begin{theorem} \label{thrm:main-continuous}
Suppose $h\in C^2$, $\beta_t\in C^1$ and $\dim \calN(Q) \geq 2$. Suppose 0 is a nondegenerate saddle point of $h\vert_{\mathcal{C}}$ and let $p$ denote the number of negative eigenvalues of $\nabla^2 h\vert_{\calC}(0)$. Then for all $t_0$ sufficiently large there exists a manifold $S_{t_0}\subset \R^{M}$ with dimension $M-p$ such that the following holds: A solution $\vx$ to \eqref{eq:ODE1} converges to 0 if and only if $\vx$ is initialized on $\calS$, i.e., $\vx(t_0) = x_0\in \calS$.
\end{theorem}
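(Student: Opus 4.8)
The plan is to reduce the non-autonomous equation \eqref{eq:ODE1} to a setting where a quantitative (time-dependent) stable-manifold argument applies, and then carry out a Lyapunov–Perron type fixed-point construction. First I would change coordinates so that the consensus subspace $\calC=\calN(Q)$ and its orthogonal complement $\calC^\perp$ are decoupled at the linear level: write $\vx = \vx^{\parallel} + \vx^{\perp}$ with $\vx^{\parallel}\in\calC$, $\vx^{\perp}\in\calC^{\perp}$. On $\calC^{\perp}$ the term $-\beta_t Q\vx$ acts like $-\beta_t \Lambda \vx^{\perp}$ with $\Lambda$ positive definite, so for $t_0$ large this direction is \emph{strongly contracting} (it behaves like a fast stable subspace, with contraction rate growing like $\beta_t\to\infty$). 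On $\calC$, the linearization at $0$ is governed by $-\nabla^2 h\vert_{\calC}(0)$, which by Assumption \ref{a:saddle-point} (in its generalized form) is hyperbolic with $p$ unstable directions and $m-p$ stable directions, where $m=\dim\calC$. Thus the total stable directions number $(M-m) + (m-p) = M-p$ and the unstable directions number $p$, matching the claimed dimension of $\calS$.

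Next I would split the linear part into its (time-dependent) stable and unstable blocks and write \eqref{eq:ODE1} in the integral (variation-of-parameters) form adapted to a moving saddle: a solution converging to $0$ must satisfy a Lyapunov–Perron integral equation in which the unstable component at time $t$ is expressed as an integral \emph{forward} from $t$ to $\infty$ of the nonlinearity, while the stable component is propagated \emph{forward} from $t_0$. Because the $\calC^{\perp}$-block has contraction rate $\int_s^t \beta_r\,dr \to \infty$, the associated propagators decay superexponentially and all the integrals converge with room to spare; the delicate estimates are only needed for the finite-dimensional hyperbolic block on $\calC$, where the gap between the (fixed) stable and unstable rates is the usual constant spectral gap of $\nabla^2 h\vert_{\calC}(0)$. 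I would then set up the Banach space of bounded (in fact exponentially decaying) solution curves on $[t_0,\infty)$, parametrized by the stable data $x_0^{s}\in\R^{M-p}$, and show the Lyapunov–Perron map is a contraction for $t_0$ large: here one uses $h\in C^2$ so that $\nabla h(x) - \nabla h(0) - \nabla^2 h(0)x = o(\|x\|)$ uniformly on a small ball, shrinking the ball if necessary so the Lipschitz constant of the nonlinearity beats the spectral gap. The fixed point gives, for each admissible stable initial datum, a unique solution converging to $0$; the graph of $x_0^s \mapsto (\text{resulting unstable component at }t_0)$ is the manifold $S_{t_0}$, and it is a continuous function of $x_0^s$ by the standard parameter-continuity of contraction fixed points — which is exactly the weaker ``graph of a continuous function'' conclusion stated (the lack of $C^1$ smoothness is precisely because $\beta_t$ is merely $C^1$ and time-varying, so one does not immediately get a smooth contraction in the usual way).

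The converse direction — that any solution converging to $0$ lies on $S_{t_0}$ — follows because such a solution is bounded and decaying near $0$, hence lies in the Banach space on which the fixed point is unique once $t_0$ is large and the ball is small; uniqueness of the fixed point for the given stable data forces it to coincide with the constructed curve, so its time-$t_0$ value is in $S_{t_0}$. One should also check that a solution with $\vx_n(t)\to x^*$ for \emph{some} $n$ in fact has the whole vector $\vx(t)\to \ones_N\otimes x^*$: this is where Lemma \ref{lemma:cont-consensus} (asymptotic consensus) is invoked, so that convergence of one coordinate upgrades to convergence of the full state to the consensus point, which then sits at the generalized saddle $0$ of $h\vert_{\calC}$ after the centering $x \mapsto x - \ones_N\otimes x^*$. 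Finally, Theorem \ref{thrm:non-convergence} is obtained from Theorem \ref{thrm:main-continuous} by the identification $M=Nd$, $h(x)=\sum_n f_n(x_n)$, $Q=L\otimes I_d$, noting that $\nabla^2 h\vert_{\calC}(0)$ is the Hessian of $f$ at $x^*$ (restricted to the diagonal), so its negative-eigenvalue count is the same $p$.

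I expect the main obstacle to be handling the \emph{time-dependence} of the linear part cleanly: one must verify uniform-in-$t$ dichotomy-type bounds for the propagators of the non-autonomous linear flow (the $\calC^{\perp}$-block has time-varying, unboundedly growing contraction, and the cross terms between the blocks must be controlled), and then confirm that the Lyapunov–Perron contraction constant can be made $<1$ by taking $t_0$ large and the neighborhood small \emph{simultaneously} — keeping track of how the required smallness of the ball depends on $t_0$ and vice versa. Establishing merely continuity (not $C^1$) of the resulting manifold is the price paid for not having an autonomous system, and is flagged as a limitation in the statement.
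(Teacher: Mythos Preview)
Your overall strategy---a Lyapunov--Perron integral equation with stable/unstable propagators, contraction on a space of bounded curves, graph-of-fixed-points manifold, converse by uniqueness---matches the paper's proof. The paper carries this out in exactly four steps: recenter, diagonalize, solve the integral equation by contraction (Lemma~\ref{lemma:contraction}), and read off $\calS$ (Lemma~\ref{lemma:stable-in-S}).

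The one tactical difference worth noting is the coordinate setup. You linearize directly at $0$ and split along the fixed decomposition $\calC\oplus\calC^{\perp}$. The paper instead first uses the implicit function theorem to produce a \emph{moving} critical point $g(\beta_t)$ of the penalized function $h(x)+\beta_t x^{\!T}Qx$, with $g(\beta_t)\to 0$, and recenters $\vy(t)=\vx(t)-g(\beta_t)$ before diagonalizing the full time-dependent Hessian $A(t)$ by a unitary $U(t)$. The reason for the recentering is that $0$ is a critical point of $h\vert_{\calC}$ but not of $h$ itself: in general $\nabla h(0)\in\calC^{\perp}$ is nonzero, so your Taylor expansion $\nabla h(x)-\nabla h(0)-\nabla^2 h(0)x=o(\|x\|)$ leaves a nonvanishing constant forcing term in the $\calC^{\perp}$ block. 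Your proposal gestures at this with the phrase ``moving saddle'' and correctly observes that the $\calC^{\perp}$ propagator decays superexponentially, which would indeed absorb a bounded forcing; but you should make that step explicit, since this is exactly what the paper's $g(\beta_t)$ shift (and the resulting $c(t)$ term, shown to be small for $t_0$ large) is doing. Conversely, the paper's time-varying diagonalization eliminates the cross-coupling you flag as an obstacle, at the price of an extra $\dot U(t)U(t)^{\!T}\vz$ term absorbed into the nonlinearity. Either bookkeeping works; the paper's is slightly cleaner for the contraction estimate.
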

Since, under Assumptions \ref{a:G-connected-undirected}--\ref{a:step-size-CT}, \eqref{dynamics_CT} is a special case of \eqref{eq:ODE1} this implies Theorem \ref{thrm:non-convergence}.

\subsection{Proof of Theorem \ref{thrm:main-continuous}}
\noindent 1. (\textit{Recenter}) By the implicit function theorem, there exists a function $g\in C^1([0,\infty); \R^M)$ such that, for each $\beta \geq 0$, $g(\beta)$ is a critical point of the penalized function $h(x)+\beta x^TQx$ and $g(\beta)\to 0$ as $\beta\to\infty$.

Letting $\vy(t) = \vx(t) - g(\beta_t)$ we see that $\vx$ is a solution to \eqref{eq:ODE1} if and only if $\vy$ is a solution to
\begin{equation}\label{eq:ODE2}
\dot \vy = -\nabla h(\vy + g(\beta_t)) - \beta_t Q(\vy + g(\beta_t)) - g'(\beta_t)\dot \beta_t,
\end{equation}
where $g'$ denotes the vector $(\frac{\partial g_i}{\partial \beta})_{i=1}^M$.
For $t\geq 0$ let
$$A(t) := \nabla^2_x \left( h(x) + \beta_t x^T Q x \right)\big\vert_{x = g(\beta_t)}$$
and let
$F(y,t) := -\nabla h(y + g(\beta_t)) - \beta_t Q(y + g(\beta_t)) - A(t)y$ so that we may express \eqref{eq:ODE2} as
\begin{equation}\label{eq:ode-y}
\dot \vy(t) = A(t)\vy(t) + F(\vy(t),t) - g'(\beta_t)\dot \beta_t.
\end{equation}

2. (\textit{Diagonalize}) For each $t\geq 0$, let $U(t)$ be a unitary matrix that diagonalizes $A(t)$, so that $U(t)A(t)U(t)^T = \Lambda(t)$, where $\Lambda(t)$ is diagonal. Since $\beta_t\in C^1$ we may construct $U(t)$ as a differentiable function of $t$. Changing coordinates again, let $\vz(t) = U(t)\vy(t)$ so that $\vy$ is a solution to \eqref{eq:ode-y} if and only if $\vz$ is a solution to
\begin{align}
\dot \vz(t) = & U(t)\dot \vy(t) + \dot U(t) \vy(t)\\
= & U(t)( A(t)U(t)^T\vz(t) + F(U(t)^T\vz(t),t)\\
& - g'(\beta_t)\dot \beta_t ) + \dot{U}(t) U(t)^T \vz(t)
\end{align}
Letting $\tilde F(z,t) := U(t)F(U(t)^T z,t) + \dot U(t) U(t)z $, the above is equivalent to
\begin{equation}\label{eq:ODE-z}
\dot \vz(t) = \Lambda(t)\vz(t) + \tilde F(\vz(t),t) - U(t)g'(\beta_t)\dot \beta_t.
\end{equation}\label{disp:unif-lip}
Note that $F(0,t) = 0$ and $F(y,t) = o(|y|^2)$ for $t\geq 0$. Consequently, for any $\epsilon>0$ there exists an $r>0$ and $T\geq 0$ such that for all $t\geq T$ and $z,\tilde z\in B_r(0)$ we have
\begin{equation} \label{eq:Lip-F}
|\tilde F(z,t) - \tilde F(\tilde z,t)| \leq \e |z-\tilde z|.
\end{equation}

\bigskip
\noindent 3. (\textit{Compute Stable Solutions}) Let $\lambda_1(t),\ldots,\lambda_M(t)$ denote the eigenvalues of $\Lambda(t)$. We may assume the eigenvalues are ordered so each $\lambda_i(t)$ varies smoothly in $t$. For $T$ sufficiently large, the sign of $\lambda_i(t)$ remains constant for all $t \geq T$, for each $i$. Without loss of generality assume that the first $k<M$ diagonal entries (eigenvalues) of $\Lambda(t)$ are negative and the remaining diagonal entries are positive for all $t$ sufficiently large. Let $\Lambda(t)$ be decomposed as
$$
\Lambda(t) =
\begin{pmatrix}
  \Lambda^s(t) & 0 \\
  0 & \Lambda^u(t)
\end{pmatrix}
$$
where $\Lambda^s(t)\in \R^{k\times k}$ denotes the `stable' diagonal submatrix and $\Lambda^u(t) \in \R^{(M-k)\times (M-k)}$ denotes the `unstable' diagonal submatrix.
Let
\begin{equation} \label{eq:V-def}
V^s(t_2,t_1) :=
\begin{pmatrix}
e^{\int_{t_1}^{t_2} \Lambda^s(\tau)\,d\tau} & 0\\
0 & 0\\
\end{pmatrix},
\end{equation}
\begin{equation}
V^u(t_2,t_1) :=
\begin{pmatrix}
0 & 0\\
0 & e^{\int_{t_1}^{t_2} \Lambda^u(\tau)\,d\tau}\\
\end{pmatrix}.
\end{equation}
By construction we have $\limsup_{t\to\infty} \lambda_j(t) < 0$,  $j=1,\ldots,k$. Hence, we may choose an $\alpha > 0$ such that $\lambda_j(t) < -\alpha < 0$ for $j=1,\ldots, k$ and all $t$ sufficiently large.
We may also choose constants $\sigma> 0$ and $K>0$ such that the following estimates hold
\begin{align}
\label{eq:ex-estimate-s}
\|V^s(t_2,t_1)\| & \leq Ke^{-(\alpha+\sigma)(t_2-t_1)}, \quad \quad t_2\geq t_1\\
\label{eq:ex-estimate-u}
\|V^u(t_2,t_1)\| & \leq Ke^{\sigma (t_2-t_1)}, \quad\quad\quad~~~ t_2\leq t_1.
\end{align}
where $t_1,t_2 \geq t_0$.
Now, suppose $a^s\in \R^k$ and consider the integral equation
\begin{align} \label{eq:integral-eq0}
& \vu(t,a^s) = V^s(t,t_0)
\begin{pmatrix}
a^s\\
0
\end{pmatrix}\\
& + \int_{t_0}^t V^s(t,\tau)\left(\tilde F(\vu(\tau,a^s),\tau) -U(\tau)g'(\beta_\tau)\dot\beta_\tau \right)\,d\tau\\
& - \int_{t}^\infty V^u(t,\tau)\left( \tilde F(\vu(\tau,a^s),\tau) -U(\tau)g'(\beta_\tau)\dot\beta_\tau  \right)\,d\tau,
\end{align}
where $\vu:[t_0,\infty)\times \R^k\to\R^M$. Note that if $t\mapsto \vu(t,a^s)$ is continuous and solves \eqref{eq:integral-eq0} then, $\vu(t,a^s)$ is differentiable and solves \eqref{eq:ODE-z} with componentwise initialization $\vu_i(t_0,a^s) = a^s_i$ for $i=1,\ldots,k$. This may be verified using the variation of parameters formula \cite{lakshmikantham1969differential}.

Given $t_0\geq 0$, let
\begin{align}\label{eq:c-def}
c(t) := & \int_{t_0}^t V^s(t,\tau) U(\tau) g'(\beta_\tau)\dot\beta_\tau\dtau\\
& +\int_{t}^\infty V^u(t,\tau) U(\tau)g'(\beta_\tau)\dot\beta_\tau\dtau, \quad t\geq t_0.
\end{align}
We remark that $c(t)$ is finite for all $t\geq t_0$ and for any $\eta>0$ we may choose $t_0$ sufficiently large so that $|c(t)|<\eta$ for all $t\geq t_0$.

Suppose $\e < \sigma/6K$ and let $r$ and $T$ be chosen so that \eqref{eq:Lip-F} holds for all $t\geq T$ and  $|c(t)| \leq r/3$ for all $t\geq T$. By Lemma \ref{lemma:contraction}, if $|a_s|< r/3$ and $t_0\geq T$, then the right-hand side of \eqref{eq:integral-eq0} is a contraction on the space
\begin{align}
X_{t_0,a^s} :=\{\vx\in C([t_0,\infty); R^M),~ \vx_i(t_0) = a_i^s+c_i(t_0),\, \\
i=1,\ldots,k, \|\vx\|_{\infty} < \infty\},
\end{align}
equipped with norm $\|\cdot\|_\infty$, where $c(t)$ is defined in \eqref{eq:c-def}. Since this space is complete, there exists a unique $\vu(\cdot,a^s)\in X_{t_0,a^s}$ solving \eqref{eq:integral-eq0}.

\bigskip
\noindent 4. (\textit{Construct Stable Manifold})
We now construct the stable set $\calS$ corresponding to the ODE \eqref{eq:ODE-z}. Let $t_0 \geq T$.
For each $z_0^s\in B_{\frac{r}{3}}(0)\subset \R^k$ let $\vu(\cdot,z_0^s)$ be the (unique) solution to \eqref{eq:integral-eq0} in $X_{T,z_0^s}$. For each $t\in[t_0,\infty)$ define the component map $\psi_j:\R\times \R^k\to \R$ by
$$
\psi_j(t,z_0^s) := \vu_j(t,z_0^s), \quad j=k+1,\ldots,M,$$
and let $\psi = (\psi_j)_{j=k+1}^M$.
The stable manifold (with respect to \eqref{eq:ODE-z}) is given by
$$
\calS :=\{(t,z_0^s,\psi(t,z_0^s)), t\geq T, \, z_0^s\in \R^k\cap B_{\frac{r}{3}}(0)\}.
$$
By construction, for any initialization $(t_0,z_0^s,z_0^u)\in \calS$, the corresponding solution $\vz$ of \eqref{eq:ODE-z} with $\vz(t_0) = (z_0^s,z_0^u)$ satisfies $\vz(t)\to 0$. Moreover, by Lemma \ref{lemma:stable-in-S} we see that $\calS$ contains all stable initializations $(t_0,z_0)$. That is, if $\vz$ is a solution to \eqref{eq:ODE-z} with $\vz(t_0) = z_0$ and $\vz(t)\to 0$, then $(t_0,z_0)\in \calS$.

Having constructed $\calS$ (the stable manifold for \eqref{eq:ODE-z}) the stable manifold  for \eqref{eq:ODE1}, denoted here by $\tilde \calS$, is obtained by an appropriate change of coordinates, $\tilde \calS :=\{(t,x)\in \R\times\R^M:~ U(t)(x-g(\beta_t)) \in \calS \}.$

\section{Conclusion} \label{sec:conclusion}
We have considered the distributed gradient descent dynamics \eqref{dynamics_CT} for nonconvex optimization. We showed that the dynamics converge to the set of critical points of the nonconvex objective (Theorem \ref{thrm:cont-conv-cp}). Furthermore, the dynamics may only converge to a saddle point of the objective if initialized from some special low-dimensional stable manifold.

\section*{Appendix}
This appendix contains some intermediate results required for the proof of Theorem \ref{thrm:main-continuous}.

The following lemma shows that the right-hand side of \eqref{eq:integral-eq0} is a contraction. Before presenting the lemma, we define a few useful quantities.
Given $a_s\in \R^k$, let $\calT: X_{t_0,a^s}\to X_{t_0,a^s}$ be given by
\begin{align}
\calT(\vu)(t) := &
 V^s(t,t_0)
\begin{pmatrix}
a^s\\
0
\end{pmatrix}
+ \int_{t_0}^t V^s(t,\tau)\tilde F(\vu(\tau))\,d\tau\\
& - \int_{t}^\infty V^u(t,\tau) \tilde F(\vu(\tau))\,d\tau + c(t),
\end{align}
where, for convenience, we suppress the argument $a^s$ previously used in $\vu$.
\begin{lemma} [$\calT$ is a contraction] \label{lemma:contraction}
Let $\sigma$, $\alpha$, and $K$ be chosen so that \eqref{eq:ex-estimate-s} is satisfied. Let $0<\epsilon <\frac{\sigma}{6K}$, and let $r$ and $T$ be chosen so that \eqref{eq:Lip-F} holds and $|c(t)|\leq r/3$ holds for all $t\geq T$. Let $a^s\in \R^k$ with $|a^s|< \frac{r}{3}$. Then $\calT:X_{t_0,a^s}\to X_{t_0,a^s}$ is a contraction.
\end{lemma}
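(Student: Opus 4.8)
The plan is to run the classical Perron fixed-point argument: show that $\calT$ is a well-defined self-map of a complete metric space and that it strictly contracts the sup-norm distance, so that the Banach fixed point theorem produces the unique $\vu(\cdot,a^s)$ solving \eqref{eq:integral-eq0}. I would work on the closed ball $B := \{\vx \in X_{t_0,a^s} : \|\vx\|_\infty \le r\}$, which is closed in $C([t_0,\infty);\R^M)$ and hence complete, and on which \eqref{eq:Lip-F} is available; since $\tilde F(0,\tau)=0$, \eqref{eq:Lip-F} also yields $|\tilde F(z,\tau)| \le \e|z|$ for $z\in B_r(0)$. The only ingredients needed are the dichotomy estimates \eqref{eq:ex-estimate-s}--\eqref{eq:ex-estimate-u}, the block structure of $V^s$ and $V^u$, the bound \eqref{eq:Lip-F}, the standing hypotheses $|a^s|<r/3$ and $|c(t)|\le r/3$ for $t\ge T$, and the smallness $\e<\sigma/(6K)$.

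\textit{Self-map.} First I would verify the affine constraint defining $X_{t_0,a^s}$: at $t=t_0$ the finite integral vanishes, $V^s(t_0,t_0)\binom{a^s}{0}=\binom{a^s}{0}$, and both the tail integral $\int_{t_0}^\infty V^u(t_0,\tau)(\cdots)\,d\tau$ and $c(t_0)$ have vanishing first $k$ components because $V^u$ is supported on the unstable (lower-right) block; hence $(\calT\vu)_i(t_0)=a_i^s+c_i(t_0)$ for $i=1,\dots,k$. Continuity of $t\mapsto\calT(\vu)(t)$ follows from continuity of $V^s(\cdot,t_0)$ and of $c$, continuity of the finite integral in its upper limit, and absolute convergence of the tail integral (since $\|V^u(t,\tau)\|\le Ke^{-\sigma(\tau-t)}$ decays while $|\tilde F(\vu(\tau))|\le\e r$ is bounded). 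For the norm bound, using \eqref{eq:ex-estimate-s}--\eqref{eq:ex-estimate-u},
\begin{align*}
\|\calT(\vu)(t)\|
&\le \|V^s(t,t_0)\|\,|a^s| + \e r\int_{t_0}^t Ke^{-(\alpha+\sigma)(t-\tau)}\,d\tau + \e r\int_t^\infty Ke^{-\sigma(\tau-t)}\,d\tau + |c(t)| \\
&\le |a^s| + \tfrac{2K\e r}{\sigma} + |c(t)| \;\le\; \tfrac r3 + \tfrac r3 + \tfrac r3 = r,
\end{align*}
where I used $\|V^s(t,t_0)\|\le1$ (from the explicit form of $V^s$), $\tfrac1{\alpha+\sigma}+\tfrac1\sigma\le\tfrac2\sigma$, and $2K\e/\sigma<1/3$. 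Thus $\calT(B)\subseteq B$.

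\textit{Contraction.} For $\vu,\vv\in B$ the terms $V^s(t,t_0)\binom{a^s}{0}$ and $c(t)$ cancel in $\calT(\vu)-\calT(\vv)$, and \eqref{eq:Lip-F} applies since $t_0\ge T$ and $\vu(\tau),\vv(\tau)\in B_r(0)$; hence
\begin{align*}
\|\calT(\vu)(t)-\calT(\vv)(t)\|
&\le \e\int_{t_0}^t \|V^s(t,\tau)\|\,|\vu(\tau)-\vv(\tau)|\,d\tau + \e\int_t^\infty \|V^u(t,\tau)\|\,|\vu(\tau)-\vv(\tau)|\,d\tau \\
&\le K\e\Big(\tfrac1{\alpha+\sigma}+\tfrac1\sigma\Big)\|\vu-\vv\|_\infty \;\le\; \tfrac{2K\e}{\sigma}\|\vu-\vv\|_\infty \;<\; \tfrac13\|\vu-\vv\|_\infty .
\end{align*}
Taking the supremum over $t\ge t_0$ gives $\|\calT\vu-\calT\vv\|_\infty\le\tfrac13\|\vu-\vv\|_\infty$, so $\calT$ is a contraction on $B$; its fixed point is the asserted $\vu(\cdot,a^s)$, which by the variation-of-parameters remark following \eqref{eq:integral-eq0} is automatically $C^1$ and solves \eqref{eq:ODE-z}.

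\textit{Main obstacle.} The argument is conceptually routine, so the real care is in the constant bookkeeping: arranging $\sigma$, $\alpha$, $K$, $r$, and the threshold $6$ in $\e<\sigma/(6K)$ so that the two $r/3$ budgets plus the nonlinear contribution close inside $B_r(0)$, and — relatedly — keeping the $k$ versus $M-k$ block decomposition of $V^s$ and $V^u$ straight so that the stable/unstable split interacts correctly with the constraint on the first $k$ coordinates in the definition of $X_{t_0,a^s}$. A secondary technical point is justifying, uniformly in $t$, the continuity and absolute convergence of the improper tail integral appearing in $\calT$.
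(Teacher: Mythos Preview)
Your proposal is correct and follows essentially the same Perron--type contraction argument as the paper: bound the self-map on the ball $\{\|\vx\|_\infty\le r\}$ using the three $r/3$ budgets, then contract with factor $2K\e/\sigma<1$ via \eqref{eq:ex-estimate-s}--\eqref{eq:ex-estimate-u} and \eqref{eq:Lip-F}. You supply a few details the paper omits (the initial-coordinate check, continuity of $\calT(\vu)$, absolute convergence of the tail integral), but the structure and constants are identical.
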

\begin{proof}
First, claim that if $\vu\in X_{t_0,a^s}$ and $\|\vu\|_{\infty} \leq r$, then $\|\calT(\vu)\|_{\infty} \leq r$. To see this, note that
\begin{align}
|\calT(\vu)(t)|  \leq & e^{-\alpha (t-t_0)}|a^s| + |c(t)|\\
& + \int_{t_0}^t Ke^{-(\alpha+\sigma)(t-\tau)}\e |\vu(\tau)|\dtau\\
& + \int_{t}^\infty Ke^{\sigma(t-\tau)}\e |\vu(\tau)|\dtau\\
 \leq & e^{-\alpha (t-t_0)}|a^s| + |c(t)|\\
 & + K\e r\int_{t_0}^t e^{-\sigma(t-\tau)}\dtau + K\e r\int_{t}^\infty e^{\sigma(t-\tau)}\dtau\\
 \leq & e^{-\alpha (t-t_0)}|a^s| + |c(t)| + \frac{2K\e r}{\sigma}\\
 \leq & r/3 + r/3 + r/3,
\end{align}
where in the last line we use the assumptions made on $|a^s|$, $\e$, and $t_0$ in the statement of the lemma.

Suppose now that $\vu,\hat \vu\in X_{t_0,a_s}$, with $\|\vu\|_{\infty},\|\hat \vu\|_{\infty} \leq r$. Let $M = \|\vu - \hat \vu\|_\infty$. For $t\geq t_0$ we have
\begin{align}
|\calT(\vu)(t) - \calT(\hat \vu)(t)| \leq & \int_{t_0}^t Ke^{-(\alpha - \sigma)(t-\tau)}\e|\vu(\tau) - \hat \vu(\tau)|\dtau\\
& + \int_{t}^\infty K e^{\sigma(t-\tau)}\e |\vu(\tau) - \hat \vu(\tau)|\dtau\\
\leq & \e KM\int_{t_0}^t e^{-(\alpha - \sigma)(t-\tau)}\dtau\\
& + \e K M\int_{t}^\infty e^{\sigma(t-\tau)}\dtau\\
\leq & \frac{2\e K}{\sigma}M.
\end{align}
Given our choice of $\e$ we have $\frac{2\e K}{\sigma} < 1$, hence, $\calT$ is a contraction.
\end{proof}

\begin{lemma} [$\calS$ contains all stable initializations] \label{lemma:stable-in-S}
Let $\e$, $r$, and $T$ be chosen as in the construction of $\calS$. Let $a^s\in \R^k$, with $|a^s|< r/3$, let $t_0\geq T$ and suppose that $\vz$ is a solution to \eqref{eq:ODE-z} with $\vz_i(t_0) = z_0 = (z_0^s, z_0^u)$ and $z_0^s = a^s$. If $\vz(t)\to 0$ as $t\to\infty$ then $(t_0,z_0)\in \calS$.
\end{lemma}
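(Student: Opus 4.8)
\emph{Plan.} The aim is to show that a solution of \eqref{eq:ODE-z} converging to the origin is forced to coincide with the fixed point of the contraction $\calT$ from Lemma~\ref{lemma:contraction}, so that its initial unstable component is completely determined by its initial stable component. Write $\vz=(\vz^s,\vz^u)$ for the splitting into the first $k$ (stable) and last $M-k$ (unstable) coordinates, matching the block decomposition of $\Lambda(t)$, and put $G(\tau):=\tilde F(\vz(\tau),\tau)-U(\tau)g'(\beta_\tau)\dot\beta_\tau$. Since $V^s$ and $V^u$ are block diagonal, variation of parameters applied to \eqref{eq:ODE-z} shows that the stable coordinates of $\vz$ automatically satisfy the stable part of the right-hand side of \eqref{eq:integral-eq0} with $a^s:=\vz^s(t_0)$ (the constant $c(\cdot)$ appearing in the definition of $X_{t_0,a^s}$ is pure bookkeeping coming from the $U g'\dot\beta$ term). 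Hence the entire content of the lemma concerns the unstable coordinates.

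For the unstable block, variation of parameters gives, for $t_1\le t$,
\[
\vz^u(t)=V^u(t,t_1)\,\vz^u(t_1)+\int_{t_1}^{t}V^u(t,\tau)\,G^u(\tau)\dtau .
\]
Because $\vz(t)\to 0$, fix $t_1\ge t_0$ with $|\vz(t)|<r/3$ for all $t\ge t_1$; then $\tilde F(0,\cdot)=0$ and \eqref{eq:Lip-F} give $|\tilde F(\vz(\tau),\tau)|\le\e|\vz(\tau)|$, so $G^u$ is bounded on $[t_1,\infty)$, and, using the backward-contraction estimate \eqref{eq:ex-estimate-u} together with the integrability of $\tau\mapsto V^u(t,\tau)U(\tau)g'(\beta_\tau)\dot\beta_\tau$ already exploited in \eqref{eq:c-def}, the improper integral $\int_{t}^{\infty}V^u(t,\tau)G^u(\tau)\dtau$ converges. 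Rearranging the displayed identity and letting the upper limit tend to $\infty$: by \eqref{eq:ex-estimate-u} we have $\|V^u(t,s)\|\le Ke^{\sigma(t-s)}\to 0$ as $s\to\infty$, while $\vz^u(s)\to 0$, so the boundary term vanishes and we obtain
\[
\vz^u(t)=-\int_{t}^{\infty}V^u(t,\tau)\,G^u(\tau)\dtau,\qquad t\ge t_1 .
\]
Combined with the stable identity noted above, this says exactly that $t\mapsto\vz(t)$ solves \eqref{eq:integral-eq0} on $[t_1,\infty)$ with parameter $a^s=\vz^s(t_1)$; equivalently, $\vz|_{[t_1,\infty)}$ is a fixed point of $\calT$ in $X_{t_1,\vz^s(t_1)}$. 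Since $|\vz^s(t_1)|<r/3$ and $\vz\to 0$ (hence $\vz$ is bounded and lies in $B_r(0)$ on $[t_1,\infty)$), Lemma~\ref{lemma:contraction} applies at initial time $t_1$ and guarantees that this fixed point is unique, so $\vz|_{[t_1,\infty)}=\vu(\cdot,\vz^s(t_1))$; in particular $\vz^u(t_1)=\psi(t_1,\vz^s(t_1))$, i.e.\ $(t_1,\vz(t_1))\in\calS$.

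It remains to pull this back from $t_1$ to the initial time $t_0\le t_1$. Here I would use the forward-invariance of $\calS$ under the flow of \eqref{eq:ODE-z} (which follows from the construction of $\calS$ in Step~4 together with uniqueness of solutions): every initialization on $\calS$ produces a solution that stays on $\calS$ and tends to $0$, so if the solution issuing from $(t_0,z_0)$ did not belong to $\calS$ it could not agree at time $t_1$ with a solution issuing from a point of $\calS$, contradicting $(t_1,\vz(t_1))\in\calS$. Hence $(t_0,z_0)\in\calS$. The main obstacle is not conceptual but lies in making the variation-of-parameters manipulation rigorous: justifying the interchange of limit and integral and the vanishing of the boundary term in the unstable equation (using \eqref{eq:ex-estimate-u} and the integrability behind \eqref{eq:c-def}), and handling the fact that the contraction estimates of Lemma~\ref{lemma:contraction} are only valid once the solution has entered $B_r(0)$ — which is precisely why the clock is restarted at $t_1$ and the conclusion transferred back to $t_0$ by invariance.
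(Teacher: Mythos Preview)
Your central manipulation is correct and is essentially the paper's argument. The paper writes the full variation-of-constants formula for $\vz$ with an explicit term $V^u(t,t_0)c$ and observes that every other term on the right-hand side is uniformly bounded in $t$, while $|V^u(t,t_0)c|\to\infty$ unless the unstable components of $c$ vanish; since $\vz$ is bounded (it converges to $0$), those components must be zero, and then $\vz$ coincides with the right-hand side of \eqref{eq:integral-eq0}. Your version---apply $V^u(t,s)$ to the variation-of-parameters identity and send $s\to\infty$ so the boundary term $V^u(t,s)\vz^u(s)$ dies---is an equivalent way to kill the unstable mode.

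The difficulty is the detour through $t_1$. You introduce $t_1$ only so that \eqref{eq:Lip-F} applies to $\tilde F(\vz(\tau),\tau)$, but that refinement is unnecessary: all you need for the improper integral to converge and for the boundary term to vanish is that $G$ be bounded on $[t_0,\infty)$. This holds because $\vz$ is continuous with $\vz(t)\to 0$, hence bounded; $\tilde F(\vz(\cdot),\cdot)$ is then bounded on the compact piece $[t_0,t_1]$ by continuity and on the tail by \eqref{eq:Lip-F}. Running your own argument from $t_0$ directly yields $\vz^u(t)=-\int_t^{\infty}V^u(t,\tau)G^u(\tau)\dtau$ for every $t\ge t_0$, and then---exactly as the paper concludes---$\vz$ solves \eqref{eq:integral-eq0} on $[t_0,\infty)$ with parameter $a^s$, so uniqueness from Lemma~\ref{lemma:contraction} gives $(t_0,z_0)\in\calS$.

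As written, the pull-back step is a genuine gap. You invoke ``forward invariance'' of $\calS$, but going from $(t_1,\vz(t_1))\in\calS$ to $(t_0,\vz(t_0))\in\calS$ with $t_0<t_1$ is \emph{backward} invariance, and backward invariance of $\calS$ (which is defined via the graph of $\psi$, not as ``the set of converging initial data'') is precisely the content of the lemma you are proving; invoking it here is circular. The ODE-uniqueness remark does not rescue the step either: you have only identified $\vz|_{[t_1,\infty)}$ with the fixed point of the integral equation based at $t_1$, which says nothing a priori about whether $\vz(t_0)$ sits on the graph of $\psi(t_0,\cdot)$. The fix is simply to drop $t_1$ and argue at $t_0$ from the start, as the paper does.
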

\begin{proof}
By variation of constants we see that
\begin{align} \label{eq:integral-eq}
\vz(t) := & V^s(t,t_0)\vz(t_0) + V^u(t,t_0)c\\
& +  \int_{t_0}^t V(t,\tau)\left(\tilde F(\vz(\tau),\tau)  -U(\tau)g'(\tau)\dot\beta_\tau \right)\,d\tau\\
& - \int_{t}^\infty V^u(t,\tau)\left(\tilde F(\vz(\tau)) -U(\tau)g'(\tau)\dot\beta_\tau \right)\,d\tau,
\end{align}
where $c = \vz(t_0) + \int_{t_0}^\infty V^u(t_0,\tau)\left( \tilde F(\vz(\tau)) -U(\tau)g'(\tau)\dot\beta_\tau \right)\dtau$. Note that integral in $c$ converges by \eqref{eq:V-def} and the fact that $\int_{t_0}^\infty U(\tau)g'(\tau)\beta_\tau\dtau < \infty$.
Every term on the right hand side of \eqref{eq:integral-eq} is uniformly bounded in $t$, except possibly the term $V^u(t,t_0)c$. In particular, if $c_j\not = 0$, $j> k$, then $|V^u(t,t_0)c|\to\infty$.
Since the left hand side of \eqref{eq:integral-eq} is bounded, it follows that the right hand side is bounded and thus all $c_j$, $j>k$ must be zero and hence $V^u(t,t_0)c = 0$.

This implies that $\vu(\cdot,a^s) = \vz$ is a solution to the integral equation \eqref{eq:integral-eq0} given $a^s$. By Lemma \ref{lemma:contraction} we see that $\vu(t,a^s)$
is the unique continuous solution of \eqref{eq:integral-eq0} given $a^s$. By the definitions of $\calS$ and $\psi$ we thus see that $(t_0,z_0)\in \calS$.
\end{proof}

\bibliographystyle{IEEEtran}
\bibliography{myRefs}

\end{document}